\setlist[enumerate]{label=\textnormal{(\arabic*)}}
\crefname{condition}{condition}{conditions}
\crefname{formula}{formula}{formulas}
\crefname{equation}{}{}
\Crefname{equation}{Equation}{Equations}
\apptocmd{\sloppy}{\hbadness 10000\relax}{}{}
\apptocmd{\sloppy}{\vbadness 10000\relax}{}{}
\definecolor{orcidcol}{HTML}{A6CE39}
\colorlet{Malte}{OliveGreen}
\colorlet{Philipp}{blue}
\newcommand{\N}{\mathbb N} 
\newcommand{\orcidlogo}{{\includegraphics[width=\fontcharht\font`l]{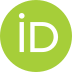}}}
\newcommand{\ie}{i.e.\ }
\newcommand{\be}{\begin{equation}}
\newcommand{\ee}{\end{equation}}
\newcommand{\bes}{\begin{equation*}}
\newcommand{\ees}{\end{equation*}}
\newcommand{\fD}{\mathfrak{D}}
\newcommand{\cA}{\mathcal{A}}
\newcommand{\cH}{\mathcal{H}}
\newcommand{\cK}{\mathcal{K}}
\newcommand{\cU}{\mathcal{U}}
\newcommand{\cW}{\mathcal{W}}
\newcommand{\bB}{\mathbb{B}}
\newcommand{\bC}{\mathbb{C}}
\newcommand{\bD}{\mathbb{D}}
\newcommand{\bF}{\mathbb{F}}
\newcommand{\bN}{\mathbb{N}}
\newcommand{\bR}{\mathbb{R}}
\newcommand{\bT}{\mathbb{T}}
\newcommand{\bZ}{\mathbb{Z}}
\newcommand{\ol}{\overline}
\newcommand{\conv}{\operatorname{conv}}
\newcommand{\distance}{\operatorname{d}}
\newtheorem{theorem}{Theorem}[section]
\newtheorem{corollary}[theorem]{Corollary}
\newtheorem{proposition}[theorem]{Proposition}
\theoremstyle{definition} 
\newtheorem{definition}[theorem]{Definition}
\newtheorem{remark}[theorem]{Remark}
\theoremstyle{remark}
\title[Empirical bounds for commuting dilations of free unitaries]{Empirical bounds for commuting dilations of free unitaries and the universal commuting dilation constant}
\author{
  Malte Gerhold$^{\MakeLowercase{a}}$ \href{https://orcid.org/0000-0003-4029-1108}{$\orcidlogo$},
  Marcel Scherer$^{\MakeLowercase{b}}$ \and
  Orr Moshe Shalit$^{\MakeLowercase{b,c}}$ \href{https://orcid.org/0000-0002-5390-6928}{$\orcidlogo$}
}
\thanks{M.G.\ was supported by the German Research Foundation (DFG) grant no.\ 397960675 and by the Alfried Krupp Wissenschaftskolleg Greifswald.}
\thanks{O.M.S.\ was supported by ISF Grant no. 431/20}
\address{$^{a}$ Institute of Mathematics and Computer Science, University of Greifswald}
\address{$^{b}$ Faculty of Mathematics, Technion Israel Institute of Technology}
\address{$^{c}$ Helen Diller Quantum Center, Technion Israel Institute of Technology}
\keywords{commuting dilations, matrix ranges, random matrices, Haar unitaries}
\subjclass[2020]{Primary 47A20; Secondary 46L07, 46L89, 60B20, 90C22}
\date{}
\begin{document}
\begin{abstract}
For a tuple $T$ of Hilbert space operators, the \emph{commuting dilation constant} is the smallest number $c$ such that the operators of $T$ are a simultaneous compression of commuting normal operators of norm at most $c$. We present numerical experiments giving a strong indication that the commuting dilation constant of a pair of independent random $N{\times}N$ unitary matrices converges to $\sqrt2$ as $N \to \infty$ almost surely. Under the assumption that this is the case, we prove that the commuting dilation constant of an arbitrary pair of contractions is strictly smaller than $2$. Our experiments are based on a simple algorithm that we introduce for the purpose of computing dilation constants between tuples of matrices.
\end{abstract}

\maketitle

\section{Introduction}

In recent years, dilation theoretic techniques and the framework of matrix convex sets have found new and significant applications to quantum information \cite{BlNe18,BlNe20,BNS23}, optimization and control \cite{HKMS19}, mathematical physics \cite{ALPP21,gerhold2023bounded}, the theory of operator systems and operator algebras \cite{DP22,FNT17,PaPa21}, and more \cite{CDN20,CDNV23,EPS25,HKM17}. 
The goal of this paper is to advance our knowledge regarding an open problem in this area: determining the {\em universal commuting dilation constant} $C_2$, \ie the smallest number $c$ such that every pair of contractions is the simultaneous compression of commuting normal operators of norm at most $c$. By combining rigorous limit theorems with numerical experimentation, we provide evidence that $C_2 \leq 2 \sqrt{\frac{2}{3}} < 2$. 

\subsection{Preliminaries and background}
Given two $d$-tuples of operators $A = (A_1, \ldots, A_d)$ and $B = (B_1, \ldots, B_d)$ in $B(\cH)^d$ on the same Hilbert space $\cH$, we define the distance
\be\label{eq:norm_dist}
\|A - B\| = \max_{1\leq i \leq d} \|A_i - B_i\|, 
\ee
induced by the norm $\|A\| =\max_i \|A_i\|$. We shall use the same notation for tuples of matrices. 
In the space $M_n^d$ consisting of all $d$-tuples of complex $n \times n$ matrices, it is convenient to define 
\[
\fD_d(n) = \{A \in M_n^d : \|A\| < 1\}
\]
and $\fD_d = \sqcup_{n=1}^\infty \fD_d(n)$. 
The \emph{matrix range} of an operator tuple $A \in B(\cH)^d$ is given by the disjoint union $\cW(A) = \sqcup_{n=1}^\infty \cW_n(A)$ where
\[
\cW_n(A) = \bigl\{\phi(A):= \bigl(\phi(A_1), \ldots, \phi(A_d)\bigr) :\phi\in \operatorname{UCP}\bigl(C^*(A), M_n\bigr)\bigr\}. 
\]
If $B \in B(\cK)^d$ is another operator tuple, we define the {\em matrix range distance} by 
\[
\distance_{\mathrm{mr}}(A,B):=\distance_{\mathrm H}\bigl(\cW(A),\cW(B)\bigr) := \sup_n \distance_{\mathrm H}\bigl(\cW_n(A),\cW_n(B)\bigr) ,
\]
the supremum over the levelwise Hausdorff distances\footnote{Recall that the Hausdorff distance $\distance_{\mathrm H}(E,F)$ between to sets $E,F \subset M_n^d$ determined by the norm \cref*{eq:norm_dist} is the infimum over all $r>0$ such that $E \subset F + r\cdot \fD_d(n)$ and $F \subset E + r \cdot \fD_d(n)$.} $\distance_{\mathrm H}\bigl(\cW_n(A),\cW_n(B)\bigr)$ in $M_n^d$ induced by the norm \cref*{eq:norm_dist}. 
We define the \emph{one-sided matrix range distance} between $A \in B(\cH)^d$ and $B \in B(\cK)^d$:
\[
  \distance_{\mathrm{mr}}(A\to B):= \sup_n \sup_{X\in \cW_n(A)}\mathop{\inf\vphantom{sup}}_{Y\in \cW_n(B)} \|X-Y\| .
\]
The one sided matrix range distance $\distance_{\mathrm{mr}}(B\to A)$ was discussed before in \cite[Section 5]{DDSS17} where it was denoted by $\delta_{\cW(A)}(\cW(B))$. The {matrix range distance} is then given by 
\[
\distance_{\mathrm{mr}}(A,B)= \max \bigl(\distance_{\mathrm{mr}}(A\to B),\distance_{\mathrm{mr}}(A\to B)\bigr) .
\]
The matrix range distance is a metric only when considered on classes of certain {\em rigid} tuples, for example on the set $\mathcal U(d)$ of unitary $d$-tuples. 
For general operator tuples it does not determine the tuple up to $*$-isomorphism, since by \cite[Proposition 5.5]{DDSS17}, $\distance_{\mathrm{mr}}(A,B) = 0$ if and only if there is unital completely isometric map sending $A_i$ to $B_i$ for $i=1, \ldots, d$ (thus the matrix range is a complete invariant of the operator system generated by a tuple, and it is known that completely isometric operator systems may generate non-isomorphic C*-algebras). 
In fact, \cite[Proposition 5.5]{DDSS17} gives more generally that $\cW(B) \subseteq \cW(A)$ if and only if there exists a unital completely positive (UCP) map $\phi$ such that $B_i = \phi(A_i)$ for $i = 1, \ldots, d$. 

Another measure of difference between tuples is given by the {\em dilation distance} \cite{GPSS21} defined as follows. 
First, we define the dilation constant $c(u,v)$. 
Given two unitary tuples $u,v$ and a real number $c>0$, we write $u \prec cv$ if there exist two Hilbert spaces $\cH \subseteq \cK$ and two operator tuples $U \in B(\cH)^d$ and $V \in B(\cK)^d$, such that $u \sim U$, $v \sim V$ and 
\[
U = P_\cH cV \big|_\cH;
\]
here and below, the notation $u \sim U$ means that there is a $*$-isomorphism $\pi\colon C^*(u) \to C^*(U)$ such that $U_i = \pi(u_i)$ for all $i=1, \ldots, d$. 
By Stinespring's theorem, $u \prec cv$ if and only if there is a UCP map from the operator system generated by $v$ to the operator system generated by $u$, that maps $cv$ to $u$. 
For $u,v \in \cU(d)$, we define the \emph{dilation constant} $c(u,v)$ to be  
\[
c(u,v) = \inf\{c :  u \prec cv\}. 
\]
The {\em dilation distance} is then defined by 
\[
\distance_{\mathrm{D}}(u,v):= \log \max\bigl\{c(u,v), c(v,u)\bigr\} .
\]
Various aspects of the dilation distance were studied in \cite{GPSS21}. For the {\em Haagerup-Rørdam distance}
\[
\distance_{\mathrm{HR}}(u,v):=\inf\left\{\left\|u'-v'\right\|\colon u', v' \in B(\cH)^d,  u \sim u' \textrm{ and } v \sim v'\right\}, 
\]
in \cite[Theorem 2.6]{GPSS21} it was proved that 
\[
\distance_{\mathrm{HR}}(u,v) \leq K \distance_{\mathrm{D}}(u,v)^{1/2}
\]
where $K$ is a universal constant (for further developments see \cite{gerhold2023bounded,gerhold2024dilation,GLL26}).

\subsection{Overview of this paper}\label{sec:overview}

Let $u_{\mathrm u}$ denote the universal $d$-tuple of unitaries, \ie the canonical generators of the full group C*-algebra $C^*(\bF_d)$ of the free group $\bF_d$. 
Let $u_0$ denote the universal $d$-tuple of commuting unitaries, \ie the canonical generators of the algebra $C(\bT^d)$ of continuous functions on the torus $\bT^d$. 
The constant $C_d := c(u_{\mathrm u}, u_0)$ is the minimal constant such that every $d$-tuple $A = (A_1, \ldots, A_d)$ of contractions can be dilated to a $d$-tuple $N = (N_1, \ldots, N_d)$ of commuting normal elements such that $\|N_i\| \leq C_d$ for all $i=1, \ldots, d$.
Note that here and below we are keeping the $d$ implicit in the notation; there are different tuples $u_{\mathrm u}, u_0$  and different $c(u_{\mathrm u}, u_0)$ for every $d$. 

For some time now it has been an open problem to determine the precise value of $C_d$ for all $d$. 
It is not hard to show that $C_d \leq d$ \cite{DDSS17, HKMS19}. 
This was improved to $C_d \leq \max\{d, 2 \sqrt{d}\}$ in \cite{PSS18} and to $C_d \leq \sqrt{2d}$ in \cite{Pas19}. 
The lower bound $C_d \geq \sqrt{d}$ follows from the selfadjoint case obtained in \cite{PSS18}. 
For general $d$ the best bounds are $\sqrt{d} \leq C_d \leq \sqrt{2d}$. 
However, for $d = 2$ and $d = 3$, it was shown in \cite{GSh20} and \cite{GPSS21}, respectively, that $C_d > \sqrt{d}$. 
Thus, the lower bound for $C_d$ is not tight. In this paper we provide evidence that the upper bound is not tight, either.
We focus on the case $d = 2$, in which the currently known best upper bound $C_2 \leq \sqrt{2 \cdot 2}$ coincides with the easy bound $C_d \leq d$. 

Let $C_2(n)$ denote the constant such that every pair of commuting $n \times n$ matrices has a dilation to $C_2(n)$ times a pair of commuting unitaries. 
By an explicit dilation construction, we show in the appendix that $C_2(n)\le\sqrt{2+2\sin(\tfrac{\pi}{2}(1-\tfrac{1}{2n}))} < 2$; see \Cref{cor:C2n}. This bound is likely to be very crude, and in any case it tends to $2$ as $n \to \infty$, so cannot be used to show that $C_2 < 2$. 
We can obtain less rigorous but more convincing results by other means.

In \cite{GSh21}, we studied matrix ranges of random matrix tuples. 
We showed that if $T^{(N)} = (T_1^{(N)}, \ldots, T_d^{(N)})$ is a random matrix ensemble and $t = (t_1, \ldots, t_d)$ a $d$-tuple of operators such that
\be\label{eq:SAF}
\lim_{N\to\infty}\|p(T^{(N)})\|=\|p(t)\|
\ee
almost surely, for every $*$-polynomial $p \in \bC\langle z, z^* \rangle$, then for all $n \in \bN$, 
\[
\distance_{\mathrm H}(\cW_n(T^{(N)}),\cW_n(t)) \xrightarrow{N \to \infty} 0,
\]
almost surely (see \cite[Theorem 3.1]{GSh21}). When combined with known results on strong convergence of matrix ensembles, this result implies convergence of matrix ranges of these matrix ensembles. For example, if $X^{(N)}$ are tuples of independent matrices from a Wigner ensemble, then for all $n$,
\[
\lim_{N \to \infty} \distance_{\mathrm H}(\cW_n(X^{(N)}),\cW_n(s)) = 0 \,\, , \,\, a.s.
\]
where $s = (s_1, \ldots, s_d)$ is a tuple of free semicirculars. 

More interesting for our purposes are ensembles of unitaries. Recall that a \emph{free tuple of Haar unitaries} is a $d$-tuple $u_{\mathrm f} = (u_{\mathrm{f} 1}, \ldots, u_{\mathrm{f}d})$ of unitaries in a C*-probability space $(\cA, \tau)$ which are freely independent and satisfy $\tau(u_{\mathrm{f}i}^k) = 0$ for all $i$ and all $k \in \bZ \setminus \{0\}$.
By \cite[Theorem 3.4]{GSh21}, if $U^{(N)} = (U^{(N)}_1, \ldots, U^{(N)}_d)$ are $d$ independent $N \times N$ unitaries sampled from the Haar measure on $\cU_N$, and $u_{\mathrm f} = (u_{\mathrm{f} 1}, \ldots, u_{\mathrm{f}d})$ is a free Haar unitary tuple, then the matrix range $\cW(U^{(N)})$ converges almost surely levelwise in the Hausdorff metric to $\cW(u_{\mathrm f})$, that is, for all $n$,
\be\label{eq:UNu}
\lim_{N \to \infty} \distance_{\mathrm H}\left(\cW_n(U^{(N)}),\cW_n(u_{\mathrm f})\right) = 0 \,\, , \,\, a.s.
\ee
In \Cref{thm:limit_c,cor:c_converge} below, we shall prove that a limit such as \cref*{eq:UNu} implies that 
\be\label{eq:limiting}
c(u_{\mathrm f}, u_0) \leq \liminf_{N \to \infty} c(U^{(N)},u_0) .
\ee 
On the other hand, by \cite[Corollary 3.8]{GPSS21}, we have the exact value
\[
c(u_{\mathrm u}, u_{\mathrm f}) = \frac{d}{\sqrt{2d-1}}. 
\]
We also have the trivial ``triangle inequality'' $c(u_{\mathrm u}, u_0) \leq c(u_{\mathrm u}, u_{\mathrm f}) c(u_{\mathrm f}, u_0)$. Putting everything together, we get 
\be\label{eq:tri_ineq}
C_d = c(u_{\mathrm u}, u_0) \leq c(u_{\mathrm u}, u_{\mathrm f}) c(u_{\mathrm f}, u_0) \leq \frac{d}{\sqrt{2d-1}} c(u_{\mathrm f}, u_0). 
\ee
In \Cref{sec:experimental_re}, we document empirical evidence suggesting that when $d = 2$ we have the limit 
\[
\lim_{N \to \infty} c(U^{(N)},u_0) = \sqrt{2}. 
\]
The evidence is gathered in an experimental setup described in \Cref{subsec:approx}, which is powered by an algorithm that we propose in \Cref{subsec:algo} for computing $c(\mathsf{U},\mathsf{N})$ for a given pairs of matrices $\mathsf{U}$ and $\mathsf{N}$.
Combining the empirical result with the rigorous relations \cref*{eq:limiting,eq:tri_ineq}, we semi-rigorously derive the inequality 
\[
C_2 \leq 2\sqrt{\frac{2}{3}} < 2 , 
\]
which we conjecture to hold true. 

It is worth recalling that by Theorems 3.9 and 3.10 in \cite{GPSS21}, we have the estimate
\be\label{eq:free_bounds}
2 \sqrt{1 - \frac{1}{d}} \leq c(u_{\mathrm f}, u_0) \leq 2 \sqrt{1 - \frac{1}{2d}}. 
\ee
Plugging the upper bound in \Cref{eq:tri_ineq} one recovers the upper bound $C_d \leq \sqrt{2d}$, which was obtained in \cite{Pas19}; in the case $d = 2$ this is equal to the trivial bound $C_2 = 2$. 
On the other hand, in the case $d = 2$, the lower bound gives $\sqrt{2}$ which is what the data suggests. 

\section{Levelwise convergence and matrix range distance}

\begin{theorem}\label{thm:limit_c}
  Let $(\xi^{(N)})_{N\in\N}$ and $(\eta^{(N)})_{N\in\N}$ be sequences of operator $d$-tuples whose matrix ranges converge levelwise to the matrix ranges of operator $d$-tuples $\xi^{(\infty)}$ and $\eta^{(\infty)}$, respectively. Then
  \[  \mathrm{d_{mr}}(\xi^{(\infty)}\to\eta^{(\infty)})\leq \liminf_{N,M\to\infty} \mathrm{d_{mr}}(\xi^{(N)}\to\eta^{(M)})\]
  and
  \[  c(\xi^{(\infty)},\eta^{(\infty)})\leq \liminf_{N,M\to\infty} c(\xi^{(N)},\eta^{(M)}).\]
\end{theorem}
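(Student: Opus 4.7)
Both inequalities are lower-semicontinuity statements for the joint liminf, and I would prove them by the same ``diagonal approximation'' strategy, carried out level by level. The essential inputs are that, for every fixed $n$, the Hausdorff distances $\mathrm{d_H}(\cW_n(\xi^{(N)}),\cW_n(\xi^{(\infty)}))$ and $\mathrm{d_H}(\cW_n(\eta^{(M)}),\cW_n(\eta^{(\infty)}))$ tend to $0$, and that these matrix ranges live in the finite-dimensional space $M_n^d$ and are compact. In each case let $L$ be the corresponding liminf; we may assume $L<\infty$, and pick $N_k, M_k\to\infty$ along which the $\liminf$ is realized.

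For the one-sided matrix range distance, fix $n$ and $X\in\cW_n(\xi^{(\infty)})$. Hausdorff convergence at level $n$ yields $X_k\in\cW_n(\xi^{(N_k)})$ with $\|X_k-X\|\to 0$. The $\sup$--$\inf$ definition of $\mathrm{d_{mr}}(\xi^{(N_k)}\to\eta^{(M_k)})$ provides $Y_k\in\cW_n(\eta^{(M_k)})$ with $\|X_k-Y_k\|\leq \mathrm{d_{mr}}(\xi^{(N_k)}\to\eta^{(M_k)})+1/k$, and a second application of Hausdorff convergence produces $\tilde Y_k\in\cW_n(\eta^{(\infty)})$ with $\|Y_k-\tilde Y_k\|\to 0$. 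The triangle inequality gives $\limsup_k\|X-\tilde Y_k\|\leq L$, so $\inf_{Y\in\cW_n(\eta^{(\infty)})}\|X-Y\|\leq L$. Taking the supremum over $X$ and over $n$ yields the first inequality.

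For the dilation constant, I would use the equivalence $c(\xi,\eta)=\inf\{c\geq 0:\cW_n(\xi)\subseteq c\cdot\cW_n(\eta)\text{ for all }n\}$ obtained from \cite[Proposition 5.5]{DDSS17} and recalled in the introduction. Writing $c_k:=c(\xi^{(N_k)},\eta^{(M_k)})$, closedness of matrix ranges gives $\cW_n(\xi^{(N_k)})\subseteq c_k\cW_n(\eta^{(M_k)})$ for every $n$. Given $X\in\cW_n(\xi^{(\infty)})$, approximate $X=\lim_k X_k$ with $X_k\in\cW_n(\xi^{(N_k)})$ and write $X_k=c_kY_k$ with $Y_k\in\cW_n(\eta^{(M_k)})$. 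The norms $\|\eta^{(M_k)}\|$ are bounded uniformly in $k$ (Hausdorff convergence at level $1$ controls the diameter of $\cW_1(\eta^{(M_k)})$), so the $Y_k$ lie in a fixed compact subset of $M_n^d$. Extracting a subsequence with $Y_k\to Y$, the levelwise Hausdorff convergence forces $Y\in\cW_n(\eta^{(\infty)})$, and passing to the limit in $c_kY_k=X_k$ yields $X=LY\in L\cdot\cW_n(\eta^{(\infty)})$. Hence $\cW_n(\xi^{(\infty)})\subseteq L\cdot\cW_n(\eta^{(\infty)})$ for every $n$, i.e., $c(\xi^{(\infty)},\eta^{(\infty)})\leq L$.

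The main technical obstacle is the compactness bookkeeping in the dilation-constant part: one has to know that the $Y_k$ sit in a single compact set (so that a limit point exists) and that the limit point actually belongs to $\cW_n(\eta^{(\infty)})$. Both are consequences of levelwise Hausdorff convergence at levels $1$ and $n$ respectively, but must be checked carefully; the one-sided distance part is then a slightly simpler variant of the same diagonal chase.
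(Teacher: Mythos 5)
Your proposal is correct and follows essentially the same route as the paper: pass to a subsequence $(N_k,M_k)$ realizing the liminf, and use levelwise Hausdorff convergence to transfer the approximate inclusion $\cW_n(\xi^{(N_k)})\subseteq \cW_n(\eta^{(M_k)})+r\,\fD_d(n)$ (respectively $\cW_n(\xi^{(N_k)})\subseteq c_k\,\cW_n(\eta^{(M_k)})$) to the limiting tuples. You merely spell out pointwise the approximation and compactness details that the paper compresses into the phrase ``it follows from the assumption of levelwise convergence.''
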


\begin{proof}
  Let 
  \[
  r> \liminf_{N,M\to\infty} \mathrm{d_{mr}}(\xi^{(N)}\to\eta^{(M)}). 
  \]
  Then there are sequences of natural numbers $N_k,M_k\xrightarrow{k\to\infty}\infty$ such that 
  \[
  \cW_n(\xi^{(N_k)})\subset \cW_n(\eta^{(M_k)})+ r{\fD_d(n)}
  \]
  for all $n,k\in\N$. It follows from the assumption of levelwise convergence that 
  \[
  \cW_n(\xi^{(\infty)})\subset \cW_n(\eta^{(\infty)}) + r\overline{\fD_d(n)}
  \] 
  for all $n\in\N$, \ie $r\geq \mathrm{d_{mr}}(\xi^{(\infty)}\to\eta^{(\infty)})$. Therefore, the claimed inequality must hold.
  
 The second inequality is proved analogously: Let 
 \[
 r> \liminf_{N,M\to\infty} c(\xi^{(N)},\eta^{(M)}).
 \] 
 Then there are sequences $N_k,M_k\xrightarrow{k\to\infty}\infty$ and $r_k < r$ such that $\xi^{(N_k)}\prec r_k \eta^{(M_k)}$ for all $k\in\N$ or, equivalently (by \cite[Proposition 5.5]{DDSS17}), $\cW_n(\xi^{(N_k)})\subset r_k \cW_n(\eta^{(M_k)})$ for all $n,k\in\N$. 
 By passing to a subsequence, we may assume that $r_k \to r_\infty \leq r$. 
 It follows from the assumption of levelwise convergence that $\cW_n(\xi^{(\infty)})\subset r_\infty \cW_n(\eta^{(\infty)})$ for all $n\in\N$, \ie $\xi^{(\infty)}\prec r_\infty \eta^{(\infty)}$, which implies to $c(\xi^{(\infty)},\eta^{(\infty)}) \leq r_\infty \leq r$. 
 Therefore, the claimed inequality must hold.   
\end{proof}

\begin{remark}
Consider a sequence $\xi^{(N)}$ of tuples such that $\cW(\xi^{(N)})$ converges levelwise, but not uniformly to $\cW(\xi^{(\infty)})$ (for the existence of examples of such tuples, see \cite{PaPa21}), and let $\eta^{(\infty)} = \eta^{(N)} = \xi^{(\infty)}$. 
Then by passing to a subsequence, we find that 
\[
\lim_{N,M\to\infty} \mathrm{d_{mr}}(\xi^{(N)},\eta^{(M)}) > 0 = \mathrm{d_{mr}}(\xi^{(\infty)},\eta^{(\infty)}) , 
\]
showing that weak inequality cannot, in general, be replaced by an equality in \Cref{thm:limit_c}. 
Similarly, one can show that the limit inferior cannot be replaced by a limit; indeed, with $\xi^{(N)}$ as before, the double sequence $\mathrm{d_{mr}}(\xi^{(N)},\xi^{(M)})$ cannot converge. 
\end{remark}

\begin{definition}
We say that an ensemble $T^{(N)}$ of random matrices has the strong asymptotic freeness property if \Cref{eq:SAF} holds for a freely independent tuple $t$. 
\end{definition}
\begin{corollary}\label{cor:c_converge}
  If a random matrix ensemble $T^{(N)}$ converges in distribution to $u_{\mathrm f}$ and has the strong asymptotic freeness property, then with probability $1$
  \be\label{eq:c_lt_liminf}
  c(u_{\mathrm f}, u_0)\leq\liminf c(T^{(N)},u_0).
  \ee
  In particular, this holds for $T^{(N)}$ a $d$-tuple of $N{\times} N$
  \begin{itemize}
  \item independent Haar distributed unitaries,
  \item matrices with $T^{(N)}_1$ deterministic unitaries whose empirical eigenvalue distribution tends to the Haar measure on $\mathbb T$, and $T^{(N)}_2,\ldots,T^{(N)}_d$ independent Haar distributed unitaries,
  \item independent uniformly distributed permutation matrices.
  \end{itemize}   
\end{corollary}

\begin{proof}
  By the strong asymptotic freeness property and \cite[Theorem 3.1]{GSh21},  $\cW(T^{(N)})$ converges levelwise to $\cW(u_{\mathrm f})$, so  Equation \eqref{eq:c_lt_liminf} follows from Theorem \ref{thm:limit_c}.
  Strong asymptotic freeness for the first two named cases is proved in \cite{CM14}. Strong asymptotic freeness of permutation matrices in standard representation (\ie acting on the orthogonal complement of the Perron-Frobenius Eigenvector) is shown in \cite{BoCo19}. This is good enough, because $T=\pi_{\mathrm{std}}(T)\oplus \pi_{\mathrm{trivial}}(T)$ and the $\pi_{\mathrm{trivial}}(T_i^{(N)})=1$ commute.    
\end{proof}

\begin{remark}
\Cref{cor:c_converge} can also be shown to hold without recourse to matrix ranges, using the fact that by the implication $(i) \Rightarrow (ii)$ in \cite[Theorem 2.3]{GSh21}, if \Cref{eq:SAF} holds for all $*$-polynomials, then it also holds for all matrix valued $*$-polynomials. 
We omit the details. 
\end{remark}

\begin{proposition}\label{prop:limsup}
  If $\lim \mathrm{d_{mr}}(\xi^{(N)}\to\xi^{(\infty)})=0$ and $\lim \mathrm{d_{mr}}(\eta^{(\infty)}\to\eta^{(N)})=0$, then
   \[\limsup \mathrm{d_{mr}}(\xi^{(N)}\to\eta^{(M)}) \leq \mathrm{d_{mr}}(\xi^{(\infty)}\to\eta^{(\infty)})\]
\end{proposition}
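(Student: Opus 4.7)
The plan is to recognize this as a standard triangle-inequality argument for the one-sided matrix range distance. The crucial preliminary fact is that $\mathrm{d_{mr}}(\to)$ satisfies a triangle inequality:
\[
\mathrm{d_{mr}}(A \to C) \;\leq\; \mathrm{d_{mr}}(A \to B) + \mathrm{d_{mr}}(B \to C)
\]
for any three operator $d$-tuples $A, B, C$. I would verify this directly from the definition: given $n$ and $X \in \cW_n(A)$, for any $\delta>0$ pick $Y \in \cW_n(B)$ with $\|X-Y\| \leq \mathrm{d_{mr}}(A \to B) + \delta$ (which is possible since the inner infimum is bounded by $\mathrm{d_{mr}}(A\to B)$), then pick $Z \in \cW_n(C)$ with $\|Y-Z\| \leq \mathrm{d_{mr}}(B \to C) + \delta$. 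The usual triangle inequality in the norm, followed by $\delta \to 0$ and taking suprema over $X$ and $n$, yields the claim.

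Once the triangle inequality is in hand, the proof proceeds by fixing $\e > 0$ and using the two hypotheses to choose $N_0, M_0$ such that
\[
\mathrm{d_{mr}}(\xi^{(N)} \to \xi^{(\infty)}) < \e \quad\text{and}\quad \mathrm{d_{mr}}(\eta^{(\infty)} \to \eta^{(M)}) < \e
\]
for all $N \geq N_0$, $M \geq M_0$. Applying the triangle inequality twice, with intermediate stops at $\xi^{(\infty)}$ and then $\eta^{(\infty)}$, gives
\[
\mathrm{d_{mr}}(\xi^{(N)} \to \eta^{(M)}) \;\leq\; \mathrm{d_{mr}}(\xi^{(N)} \to \xi^{(\infty)}) + \mathrm{d_{mr}}(\xi^{(\infty)} \to \eta^{(\infty)}) + \mathrm{d_{mr}}(\eta^{(\infty)} \to \eta^{(M)})
\]
\[
< \mathrm{d_{mr}}(\xi^{(\infty)} \to \eta^{(\infty)}) + 2\e.
\]
Taking $\limsup_{N,M\to\infty}$ followed by $\e \to 0$ yields the stated bound.

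There is no real obstacle here; the only subtlety is handling the sup-inf structure correctly when deriving the triangle inequality, which is resolved by the usual trick of picking near-optimizers up to an $\e$-slack and letting the slack go to zero. Note that the argument works uniformly at every level $n$ because each $\mathrm{d_{mr}}$ already incorporates the supremum over $n$, so no level-dependent choices are needed.
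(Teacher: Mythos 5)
Your proof is correct and follows essentially the same route as the paper's: the paper also deduces the claim from the two-step triangle inequality $\mathrm{d_{mr}}(\xi^{(N)}\to\eta^{(M)}) \leq \mathrm{d_{mr}}(\xi^{(N)}\to\xi^{(\infty)}) + \mathrm{d_{mr}}(\xi^{(\infty)}\to\eta^{(\infty)}) + \mathrm{d_{mr}}(\eta^{(\infty)}\to\eta^{(M)})$, taking the triangle inequality for the one-sided distance as given where you verify it explicitly via near-optimizers. Your added verification is a correct (and harmless) elaboration of a step the paper treats as standard.
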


\begin{proof}
  By assumption,
  \begin{multline*}
    \mathrm{d_{mr}}(\xi^{(N)}\to\eta^{(M)}) \leq  \mathrm{d_{mr}}(\xi^{(N)}\to\xi^{(\infty)}) +  \mathrm{d_{mr}}(\xi^{(\infty)}\to\eta^{(\infty)}) +  \mathrm{d_{mr}}(\eta^{(\infty)}\to\eta^{(M)}) \\\xrightarrow{M,N\to \infty}  \mathrm{d_{mr}}(\xi^{(\infty)}\to\eta^{(\infty)})
  \end{multline*}
  and the claim follows.
\end{proof}

The respective condition on the $\xi^{(N)}$ or $\eta^{(N)}$ in \Cref{prop:limsup} is trivially fulfilled if the sequence is constant (which can still be interesting, as long as one of the two is non-constant).  But there are also properties of the limits $\xi^{(\infty)}$ and $\eta^{(\infty)}$ which guarantee the respective condition for arbitrary sequences which converge levelwise. 

\begin{proposition}
  For an operator tuple $X\in B(\cH)^d$, we write $\cW^{k\text{-$\max$}}(X)$ and $\cW^{k\text{-$\min$}}(X)$ for the maximal and minimal matrix convex set containing $\cW_k(X)$, respectively. Let $\xi^{(N)}, \xi^{(\infty)}, \eta^{(N)}, \eta^{(\infty)}$ be operator $d$-tuples. 
  \begin{enumerate}
  \item If $\cW(\xi^{(\infty)})$ is the levelwise limit of $\cW(\xi^{(N)})$ and the uniform limit of  $\cW^{k\text{-$\max$}}(\xi^{(\infty)})$, \ie if
  \[
     \forall n .  \lim_{N\to \infty} \distance_{\mathrm H}\left(\cW_n(\xi^{(N)}),\cW_n(\xi^{(\infty)})\right) = 0 
     \,\, \textrm{ and } \,\,
     \lim_{k\to\infty} \distance_{\mathrm{mr}} \left(\cW^{k\text{-$\max$}}(\xi^{(\infty)}), \cW(\xi^{(\infty)})\right) = 0 , 
  \]
  then
  \[
  \lim_{N \to \infty} \mathrm{d_{mr}}(\xi^{(N)}\to\xi^{(\infty)}) = 0 .
  \]
  \item If $\cW(\eta^{(\infty)})$ is the levelwise limit of $\cW(\eta^{(N)})$ and the uniform limit of  $\cW^{k\text{-$\min$}}(\eta^{(\infty)})$, \ie if
  \[
     \forall n .  \lim_{N\to \infty} \distance_{\mathrm H}\left(\cW_n(\eta^{(N)}),\cW_n(\eta^{(\infty)})\right) = 0
     \,\, \textrm{ and } \,\, 
     \lim_{k\to\infty} \distance_{\mathrm{mr}} \left(\cW^{k\text{-$\min$}}(\eta^{(\infty)}), \cW(\eta^{(\infty)})\right) = 0 , 
  \]
  then 
  \[
  \lim_{N \to \infty} \mathrm{d_{mr}}(\eta^{(\infty)}\to\eta^{(N)}) = 0 .
  \]
  
  \end{enumerate}
\end{proposition}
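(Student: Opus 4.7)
My plan is to run the same triangle-inequality argument for both parts, inserting a $\cW^{k\text{-}\max}$ or $\cW^{k\text{-}\min}$ as an intermediate approximant between $\cW(\xi^{(N)})$, $\cW(\eta^{(N)})$ and $\cW(\xi^{(\infty)})$, $\cW(\eta^{(\infty)})$. Fixing $\epsilon>0$, the uniform limit hypothesis supplies a level $k$ for which the intermediate set is within $\epsilon/2$ in $\distance_{\mathrm{mr}}$ of the target; the remaining task is to compare the intermediate sets at $N$ and at $\infty$, which is a continuity statement for the $\cW^{k\text{-}\max}$ or $\cW^{k\text{-}\min}$ operation under Hausdorff perturbations of its level-$k$ data.

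For part (1), the chain is $\cW(\xi^{(N)}) \subseteq \cW^{k\text{-}\max}(\xi^{(N)})$ and $\cW^{k\text{-}\max}(\xi^{(\infty)}) \supseteq \cW(\xi^{(\infty)})$, both being instances of the defining maximality of $\cW^{k\text{-}\max}$. The needed continuity statement is: $\cW_k(\xi^{(N)})\to\cW_k(\xi^{(\infty)})$ in Hausdorff distance should imply $\cW^{k\text{-}\max}(\xi^{(N)})\to\cW^{k\text{-}\max}(\xi^{(\infty)})$ in $\distance_{\mathrm{mr}}$, and I expect this to be the main obstacle. From the dual characterization $\cW^{k\text{-}\max}(\xi)_n = \{Y \in M_n^d : \psi(Y) \in \cW_k(\xi) \textrm{ for every UCP } \psi \colon M_n \to M_k\}$ one obtains only a pointwise bound $\psi(Y) \in \cW_k(\xi^{(\infty)}) + \eta\overline{\fD_d(k)}$ for $Y \in \cW^{k\text{-}\max}(\xi^{(N)})_n$, where $\eta := \distance_{\mathrm H}(\cW_k(\xi^{(N)}),\cW_k(\xi^{(\infty)}))$; the delicate step is to extract from this a single corrector $Y_0 \in \cW^{k\text{-}\max}(\xi^{(\infty)})_n$ with $\|Y-Y_0\|$ uniformly small in $n$. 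A concrete path, under the mild regularity that $\cW_k(\xi^{(\infty)})$ contains a ball of radius $r$ around $0$ (which holds for the free Haar unitary tuples intended as the main application), is a Minkowski rescaling: convexity at level $k$ yields $\cW_k(\xi^{(\infty)}) + \eta\overline{\fD_d(k)} \subseteq (1+\eta/r)\cW_k(\xi^{(\infty)})$, and since $\cW^{k\text{-}\max}$ is monotone and homogeneous in its level-$k$ data we deduce $\cW^{k\text{-}\max}(\xi^{(N)}) \subseteq (1+\eta/r)\cW^{k\text{-}\max}(\xi^{(\infty)})$; uniform boundedness of $\cW^{k\text{-}\max}(\xi^{(\infty)})$ (inherited from boundedness of $\xi^{(\infty)}$ via the joint numerical range) then forces $\|Y-Y_0\| = O(\eta)$ uniformly in $n$. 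An alternative, intrinsic path is via Effros--Winkler matrix polar duality, which transports the max-continuity question into the easier $\cW^{k\text{-}\min}$-continuity question treated below.

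Part (2) is the dual version and is markedly easier, since the needed continuity $\distance_{\mathrm{mr}}(\cW^{k\text{-}\min}(\eta^{(\infty)}),\cW^{k\text{-}\min}(\eta^{(N)})) \to 0$ is immediate from the matrix convex hull description. Given $Y = \sum V_i^* a_i V_i \in \cW^{k\text{-}\min}(\eta^{(\infty)})_n$ with $a_i \in \cW_k(\eta^{(\infty)})$ and $\sum V_i^* V_i = I_n$, pick $b_i \in \cW_k(\eta^{(N)})$ with $\|a_i - b_i\| \leq \eta := \distance_{\mathrm H}(\cW_k(\eta^{(\infty)}),\cW_k(\eta^{(N)}))$ and set $Y_0 := \sum V_i^* b_i V_i \in \cW^{k\text{-}\min}(\eta^{(N)})_n$; the estimate $\|Y - Y_0\| \leq \eta$ follows because the row operator $V$ with blocks $V_i$ is an isometry. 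Combining this with the uniform inner approximation $\cW^{k\text{-}\min}(\eta^{(\infty)}) \to \cW(\eta^{(\infty)})$ (from hypothesis) and the trivial $\cW^{k\text{-}\min}(\eta^{(N)}) \subseteq \cW(\eta^{(N)})$ via the triangle inequality yields the desired conclusion $\mathrm{d_{mr}}(\eta^{(\infty)} \to \eta^{(N)}) \to 0$.
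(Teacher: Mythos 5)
Your skeleton is the paper's: interpolate through $\cW^{k\text{-$\max$}}$ (resp.\ $\cW^{k\text{-$\min$}}$), use the uniform-limit hypothesis to fix $k$ with error $\varepsilon$, then send $N\to\infty$ for that fixed $k$. Part (2) of your argument is, in expanded form, exactly the paper's: the paper's whole justification there is the remark that $\cW_k(\eta^{(\infty)})\subset \cW_k(\eta^{(N)})+\varepsilon\fD_d$ if and only if $\cW^{k\text{-$\min$}}(\eta^{(\infty)})\subset \cW(\eta^{(N)})+\varepsilon\fD_d$, which is your row-isometry computation (equivalently: the right-hand side is a matrix convex set containing $\cW_k(\eta^{(\infty)})$, hence contains its matrix convex hull). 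Nothing to add there.

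The divergence is in part (1), and you have correctly located the crux. The paper does not route through a continuity statement for the operation $C\mapsto W^{k\text{-}\max}(C)$ at all: it simply declares $\cW_k(\xi^{(N)})\subseteq \cW_k(\xi^{(\infty)})+\varepsilon\fD_d$ to be \emph{equivalent} to $\cW(\xi^{(N)})\subseteq \cW^{k\text{-$\max$}}(\xi^{(\infty)})+\varepsilon\fD_d$ and concatenates inclusions; unwinding that one-liner, the nontrivial direction is precisely the selection problem you isolate (a level-$k$ containment of the perturbed data must be upgraded to a containment of whole matrix convex sets, i.e.\ one must know that $\cW^{k\text{-$\max$}}(\xi^{(\infty)})+\varepsilon\overline{\fD_d}$ is still determined at level $k$). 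Your Minkowski-rescaling fix is correct as far as it goes ($\cW^{k\text{-}\max}$ is indeed monotone and positively homogeneous in its level-$k$ data, and boundedness of $\cW^{k\text{-}\max}(\xi^{(\infty)})$ follows from boundedness of $\cW_k(\xi^{(\infty)})$ for $k\ge 2$ via rank-two compressions), and the interior hypothesis does hold in the intended application --- indeed $\cW_k(u_{\mathrm f})\supseteq \tfrac{\sqrt{2d-1}}{d}\,\overline{\fD_d(k)}$ because $\cW(u_{\mathrm u})=\overline{\fD_d}$ and $u_{\mathrm u}\prec\tfrac{d}{\sqrt{2d-1}}u_{\mathrm f}$. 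But be aware that what you have proved is a conditional form of part (1): the proposition as stated carries no assumption that $\cW_k(\xi^{(\infty)})$ contains a ball about $0$, so your argument does not cover, e.g., tuples whose matrix range has empty interior, and your error bound degrades from $\eta$ to $O(\eta)$ with a constant depending on $k$. Either supply the unconditional upgrade (the Effros--Winkler polar-duality route you sketch, which indeed converts the $\max$ question into the easy $\min$ computation, is the cleanest way to do this), or state the extra hypothesis explicitly in the proposition.
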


\begin{proof}
  For every $\varepsilon>0$ and every $k$, there exists an $N_0$ such that for all $N>N_0$ we have
  \[\cW_k(\xi^{(N)})\subseteq \cW_k(\xi^{(\infty)}) +  \varepsilon {\fD_d}\]
  or, equivalently,
  \[\cW(\xi^{(N)})\subseteq \cW^{k\text{-$\max$}}(\xi^{(\infty)}) + \varepsilon {\fD_d}.\]
  Now assume that $\lim_{k\to\infty}\cW^{k\text{-$\max$}}(\xi^{(\infty)})=\cW(\xi^{(\infty)})$.
  Given $\varepsilon$, we can choose $k$ such that
  \[\cW^{k\text{-$\max$}}(\xi^{(\infty)})\subseteq \cW(\xi^{(\infty)}) +\varepsilon {\fD_d}\]
  and, therefore, we have for all $N>N_0$ 
  \[\cW(\xi^{(N)})\subseteq \cW^{k\text{-$\max$}}(\xi^{(\infty)}) + \varepsilon {\fD_d}\subseteq  \cW(\xi^{(\infty)}) + 2 \varepsilon {\fD_d}.\]
  This shows that $\mathrm{d_{mr}}(\xi^{(N)}\to\xi^{(\infty)})$ converges to $0$. The second claim is proved analogously, noting that
  $\cW_k(\eta^{(\infty)})\subset \cW_k(\eta^{(N)})  +\varepsilon {\fD_d}$ if and only if $\cW^{k,\min}(\eta^{(\infty)})\subset \cW(\eta^{(N)})+\varepsilon {\fD_d}$.
\end{proof}

The uniform convergence of $\cW^{k\text{-$\max$}}(X)$ and $\cW^{k\text{-$\min$}}(X)$ to $\cW(X)$ was characterized by Passer and Paulsen in operator system terms (lifting property and 1-exactness) in \cite[Theorems 3.3 and 3.7]{PaPa21}. 

We close this section with a lengthy remark on the selfadjoint case. 
\begin{remark}\label{rem:selfadjoint}
Consider the dilation constants $\alpha_d, \beta_d, \gamma_d, \delta_d$ defined as follows:
\begin{enumerate}
\item $\alpha_d$ is the smallest constant such that every $d$ selfadjoint contractions can be dilated to a $d$-tuple of commuting selfadjoint operators of norm at most $\alpha_d$. 
\item $\beta_d$ is the smallest constant such that every $d$ selfadjoint contractions can be dilated to a $d$-tuple of commuting selfadjoint operators of the form $\beta_d \times$unitary.
\item $\gamma_d$ is the smallest constant such that every $d$ selfadjoint contractions can be dilated to a $d$-tuple of commuting normal operators of norm at most $\gamma_d$. 
\item $\delta_d$ is the smallest constant such that every $d$ selfadjoint contractions can be dilated to a $d$-tuple of commuting operators of the form $\delta_d \times$unitary.
\end{enumerate}
Tautologically, $\alpha_d \geq \gamma_d$, but since $A \prec B$ and $A = A^*$ imply that $A \prec \frac{1}{2}(B+B^*)$, we have that $\alpha_d \leq \gamma_d$ thus $\alpha_d = \gamma_d$. 
Likewise, $\alpha_d \geq \beta_d$ and $\gamma_d \geq \delta_d$, but by ``pushing the spectrum to the extreme" one can show that $\alpha_d = \beta_d$ and that $\gamma_d = \delta_d$ (see, e.g. \cite[Proposition 2.3]{PSS18}); thus, all these constants are equal. 

In a similar vein, one can show that if $x$ is a $d$-tuple of selfadjoint operators, then $c(x,u_0) := \min \{ c : x \prec cu_0 \}$ is equal to the minimal constant $c$ such that $x \prec c v$ for a tuple $v$ of commuting selfadjoint unitaries, and also equal to the minimal $c$ such that $x \prec c y$ for a commuting tuple $y$ of selfadjoint contractions. 

By \cite[Theorem 6.7]{PSS18} we know that $\alpha_d = \sqrt{d}$, thus, numerical experiments are not needed for shedding light on this constant. 
Moreover, the sharpness of $\sqrt{d}$ is exhibited already by a tuple of anti-commuting unitaries. 
In particular, for $d = 2$, we have that the universal commuting selfadjoint dilation constant is achieved already as $c(F,u_0) = \sqrt{2}$ for $F_1 = \begin{psmallmatrix} 1 & 0\\ 0 & -1 \end{psmallmatrix}$, $F_2 = \begin{psmallmatrix} 0 & 1\\ 1 & 0 \end{psmallmatrix}$.
The referee asked whether the dilation constants for pairs of selfadjoint random $N \times N$ unitaries converge. 
If $D^{(N)}$ is a sequence of diagonal selfadjoint unitary matrices with empirical distribution converging to $\frac{1}{2} \delta_{-1} + \frac{1}{2} \delta_1$, and if $U^{(N)}_1$ and $U^{(N)}_2$ are two independent random unitary matrices, and if we define $R_i^{(N)} = U^{(N)}_i D^{(N)} (U^{(N)}_i)^*$ for $i = 1, 2$, then it follows from \cite[Theorem 1.4]{CM14} that the pair of selfadjoint unitaries $R^{(N)} = (R_1^{(N)}, R_2^{(N)})$ is asymptotically free and converges strongly to a pair of free Bernoulli variables $r = (r_1, r_2)$. 
Now \cite[Theorem 3.1]{GSh21} shows that $\cW(R^{(N)}) \to \cW(r)$ levelwise so that Theorem \ref{thm:limit_c} applies and we find that 
\[
\liminf_{N\to\infty} c(R^{(N)},u_0) \geq  c(r,u_0). 
\]
To estimate $c(r,u_0)$, note that $r_1$ and $-r_2r_1r_2$ are freely independent Bernoulli variables, and therefore their sum $r_1 - r_2 r_1 r_2$ is distributed according to the arcsine distribution on $[-2,2]$ (see \cite[Example 12.8]{NiSp06}). 
It follows that the norm of the commutator $[r_1, r_2] = r_1 r_2 - r_2 r_1$ is $\|[r_1, r_2] \| = \|r_1 - r_2 r_1 r_2\| = 2$. 
Now the proof of \cite[Theorem 3.9]{GPSS21} (with $u = r_1$ and $v = r_2$) shows that $c(r,u_0) \geq \sqrt{2}$. 
On the other hand, $c(R^{(N)},u_0) \leq  \sqrt{2}$ because $\alpha_2 = \delta_2 = \sqrt{2}$, and so $\limsup_{N\to\infty} c(R^{(N)},u_0) \leq \sqrt{2}$.  
Putting the two inequalities together we conclude that 
\[
\lim_{N \to \infty}c(R^{(N)},u_0) = c(r,u_0) = \sqrt{2} ,
\]
as the referee suspected.

Another natural question to ask about the selfadjoint case is the following. We know that if $X^{(N)}$ is a $d$-tuple of (appropriately normalized) independent random Wigner $N \times N$ matrices satisfying some additional assumptions (in particular, if they are sampled from the GUE ensemble), then $X^{(N)}$ converges strongly to the tuple $s = (s_1, \ldots, s_d)$ of free semicircular operators. 
We know from \cite[Theorem 3.2]{GSh21} that $\cW(X^{(N)}) \to \cW(s)$. Therefore, by Theorem \ref{thm:limit_c}, we have that 
\[  
c(s,u_0)\leq \liminf_{N\to\infty} c(X^{(N)},u_0).
\]
However, we do not know much about $c(s,u_0)$. Rudimentary numerical experiments that we carried out (for values $N = 10, 50, 100, 500, 1000$) suggest that $\lim_{N\to\infty} c(X^{(N)}/2,u_0) = 1$ and combining with Theorem 2.1, this will imply that $c(s/2,u_0) = 1$ because it cannot be smaller than $1$.  
This would suggest the plausibility of the containment $\cW(s/2) \subsetneq \cW^{\operatorname{min}}([-1,1]^d)$ (compare with \cite[Theorem 5.4]{GSh21}, where it is established that $\cW^{\operatorname{min}}(\ol{\bB}_d) \subsetneq \cW(s/2)$, where $\ol{\bB}_d$ is the Euclidean unit ball in $\bR^d$). It would be interesting to prove this rigorously.
\end{remark}

\section{Computing dilation constants --- algorithmic aspects}\label{sec:algorithmic}

Our experiment consists of numerically approximating $c(\mathsf{U},u_0)$ for repeated samples of a pair of independent Haar unitaries $\mathsf{U} = (\mathsf{U}_1, \mathsf{U}_2)  = U^{(N)}$ for a growing sequence of values of $N$. Fixing $N$, we observe the distribution of the values of the dilation constant $c(\mathsf{U},u_0)$, and study its behavior as $N$ grows. 
The data that we collected is described in \Cref{sec:experimental_re}. In the remainder of this section, we explain how we approximate the dilation constants $c(\mathsf{U},u_0)$. 

\subsection{Approximation of the dilation constant}\label{subsec:approx}
To approximate $c(\mathsf{U},u_0)$ for a fixed given pair $\mathsf{U} = (\mathsf{U}_1, \mathsf{U}_2)$ of $N \times N$ unitary matrices, we first define a pair of commuting normal matrices $\mathsf{N} = (\mathsf{N}_1, \mathsf{N}_2)$ that approximates $u_0$ by letting the values of the diagonal of $\mathsf{N}$ run over all $k^2$ possible pairs of points in $V_k \times V_k$, where 
\[
V_k = \left\{ \exp\left(\frac{2\pi i m}{k}\right) : m=0,1,\ldots, k-1 \right\}. 
\]
The set $V_k$ consists of the extreme points of a polygon $P_k = \conv(V_k)$ in $\bC \cong \bR^2$. 
Therefore, $c(\mathsf{U},\mathsf{N})$ is the minimal constant $C$ such that $\mathsf{U}$ is a compression of a pair of commuting normal operators $\mathsf{T}$ with $\sigma(\mathsf{T}) \subset P_k \times P_k$. 
Note that since $P_k^2 \subset \ol{\bD}^2 \subset \cos(\pi/k)^{-1} \cdot P_k^2$, we find that 
\be\label{eq:within}
\cos(\pi/k) c(\mathsf{U},\mathsf{N}) \leq c(\mathsf{U},u_0) \leq c(\mathsf{U},\mathsf{N}) .
\ee
When $k = 20$, for example, we get an upper bound that lies within about $1.3 \%$ of the true value of $c(\mathsf{U},u_0)$, which, in turn, almost surely becomes an upper bound for $c(u_{\mathrm f},u_0)$ as $N \to \infty$, by \Cref{cor:c_converge}. 
It is worth recalling from \Cref{eq:free_bounds} that we already have the lower bound
\[
c(u_{\mathrm f},u_0) \geq \sqrt{2}. 
\]

\subsection{Algorithm for computing \texorpdfstring{$c(\mathsf{U},\mathsf{N})$}{c(U,N)}}\label{subsec:algo}
The following algorithm for computing the dilation constant $c(\mathsf{U},\mathsf{N})$ is an adaptation of the algorithm by Helton, Klep and McCullough \cite{HKM13} for determining whether there exists a UCP map between two sets of matrices. 
In this form it appeared first in \cite{ShaBlog}.

To numerically compute $c(\mathsf{U},\mathsf{N})$ for a fixed pairs of matrices, we set up the semidefinite program
\be\label{eq:obj}
\texttt{maximize } r
\ee
subject to the constraints that the $k^2$ matrix variables $\mathsf{C}_1, \ldots, \mathsf{C}_{k^2} \in M_N(\bC)$ satisfy
\be\label{eq:constraint1}
\mathsf{C}_j \geq 0, \quad \textrm{ for all } j = 1, \ldots, k^2, 
\ee
\be\label{eq:constraint2}
\sum_{j=1}^{k^2} \mathsf{C}_j = I_N
\ee
and
\be\label{eq:constraint3}
\sum_{j=1}^{k^2} (\mathsf{N}_i)_{jj} \mathsf{C}_j = r \mathsf{U}_i, \quad \textrm{ for } i=1,2.
\ee
Note that the constraints \cref*{eq:constraint1,eq:constraint2} mean that the map $E_{jj} \mapsto \mathsf{C}_j$ (for $j=1, \ldots, k^2$) defines a UCP map from the diagonal $k^2 \times k^2$ matrices to $M_N(\bC)$; by Arveson's extension theorem, this map extends to a UCP map $\Phi \colon M_{k^2}(\bC) \to M_N(\bC)$.  
The constraint \cref*{eq:constraint3} means that $\Phi(\mathsf{N}_i) = r\mathsf{U}_i$ for $i=1,2$. 
Thus, a solution to the above semidefinite program is the maximal $r$ such that there exists a UCP map $\Phi$ such that $\Phi(\mathsf{N}) = r \mathsf{U}$. 
The dilation constant $c(\mathsf{U},\mathsf{N})$ is therefore given by $c(\mathsf{U},\mathsf{N}) = r^{-1}$.

\section{Experimental results}\label{sec:experimental_re}

We now present the experimental results that support the hypothesis that 
\[
\lim_{N \to \infty} c(U^{(N)}, u_0) = \sqrt{2}
\]
almost surely. 
Recall that by \Cref{cor:c_converge}, the limit inferior $\liminf_{N \to \infty} c(U^{(N)}, u_0)$ is larger than the the constant $c(u_{\mathrm f}, u_0)$ which we seek. 
On the other hand, by \Cref{eq:free_bounds} we have $c(u_{\mathrm f}, u_0) \geq \sqrt{2}$, so if $\liminf_{N \to \infty} c(U^{(N)}, u_0) = \sqrt{2}$ a.s., then we would be able to deduce that $c(u_{\mathrm f}, u_0) = \sqrt{2}$ and also that $C_2 = c(u_{\mathrm u}, c_0) \leq 2 \sqrt{\frac{2}{3}} < 2$. 

We have even bigger confidence in a weaker claim, which is still stronger than the known upper bound. 
Note that all the constants $c(\mathsf{U}, \mathsf{N})$, to which we numerically compute approximations, are upper bounds for the values of $c(\mathsf{U},u_0)$, which can be thought of as samples of the random variables in question $c(U^{(N)}, u_0)$. 
No single instance that we computed fell above $\frac{\sqrt{2} + \sqrt{3}}{2} \approx 1.57$, which is the midpoint of the bounds given by \Cref{eq:free_bounds} for $d = 2$. We consider this to be strong evidence that $c(u_{\mathrm f}, u_0) < \frac{\sqrt{2} + \sqrt{3}}{2}$, showing that 
\[
  C_2 = c(u_{\mathrm u}, c_0) \leq c(u_{\mathrm u}, u_{\mathrm f}) c(u_{\mathrm f}, c_0) < \frac{2}{\sqrt{3}}\frac{\sqrt{2} + \sqrt{3}}{2}  =  1+\sqrt{{\tfrac{2}{3}}}< 2.
\]

\subsection{Experiment description and collected data}\label{subsec:exp_n_data}
We ran the algorithm described in \Cref{subsec:algo} on pairs $(\mathsf{U}, \mathsf{N})$ as in \Cref{subsec:approx} for various values of $N$ (the size of $\mathsf{U}$) and various values of $k$ (the size of $\mathsf{N}$). The number of variables in the problem has an order of magnitude
$
k^2 N^2
$
and likewise for the number of constraints. The complexity of the optimization algorithm used by SCS is quadratic in the size of the problem. Therefore, for certain values of $N$ and $k$, we limit the number of trials $m$ that we run in order to collect data, and settle for a few indicative runs.

We ran $m = 100$ experiments for $k = 20$ and each $N = 5 \cdot j$, for $j = 2, \ldots, 12$. For every choice of $N$ we plot the histogram (see \Cref{fig:histograms}). 
We also plot the mean and standard deviations as functions of $N$ (see \Cref{fig:combined-images-main}). 

\begin{figure}[ht]
    \centering
    \caption{Histograms of calculated dilation constants plotted separately for different matrix size $N$. As we expected, as $N$ grows, the values of the dilation constants accumulate closely to a value slightly above $\sqrt{2}$ (left dotted line) and below $\sqrt{2}/\cos(\pi/k)$ (right dotted line).}
    \label{fig:histograms}
    \includegraphics[width=\textwidth]{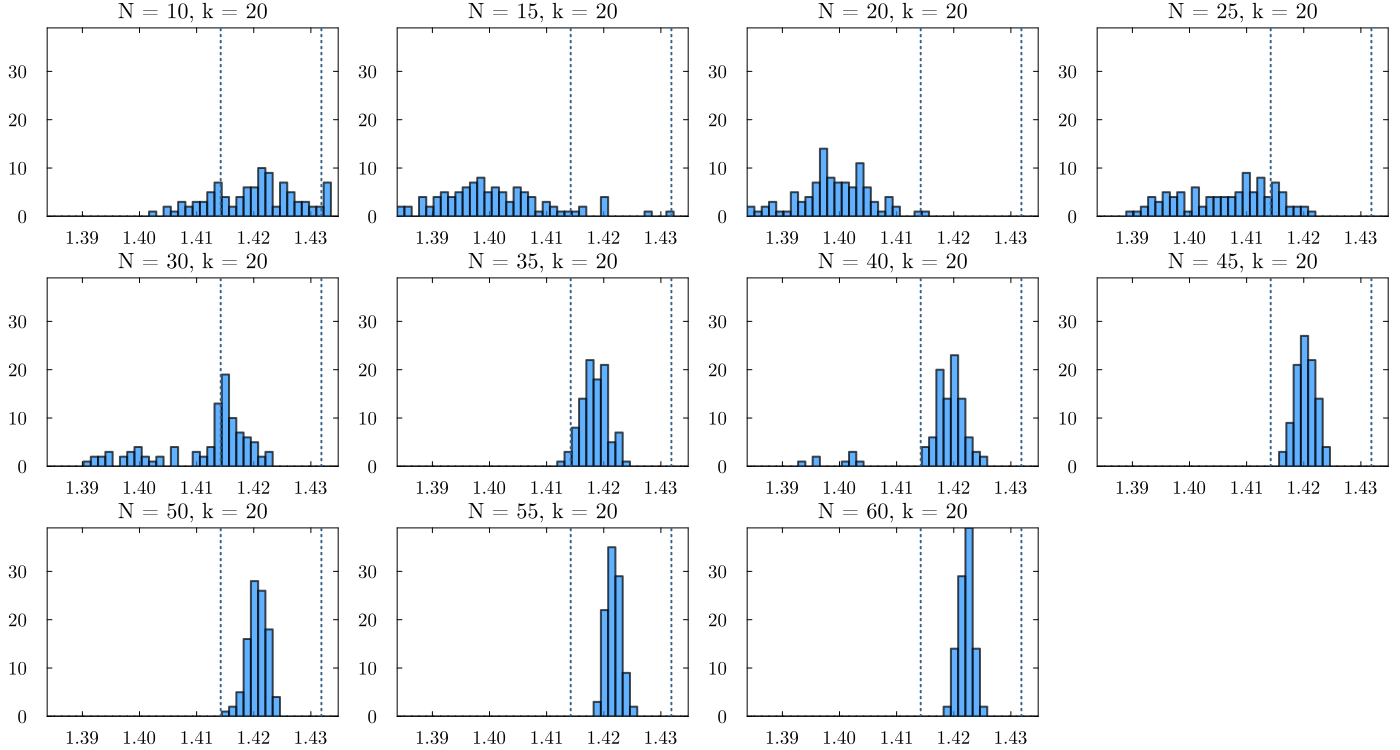}
\end{figure}
 
\begin{figure}[ht]
    \centering
    \caption{The mean and standard deviations of the values of dilations constants as functions of $N$.}
    \label{fig:combined-images-main}
    
    \begin{minipage}[c]{0.45\textwidth}
        \centering
        \includegraphics[width=\textwidth]{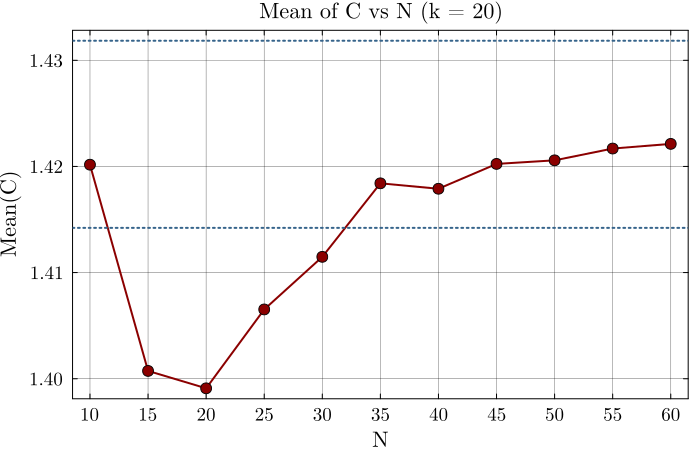}
    \end{minipage}
    \hfill
    \begin{minipage}[c]{0.45\textwidth}
        \centering
        \includegraphics[width=\textwidth]{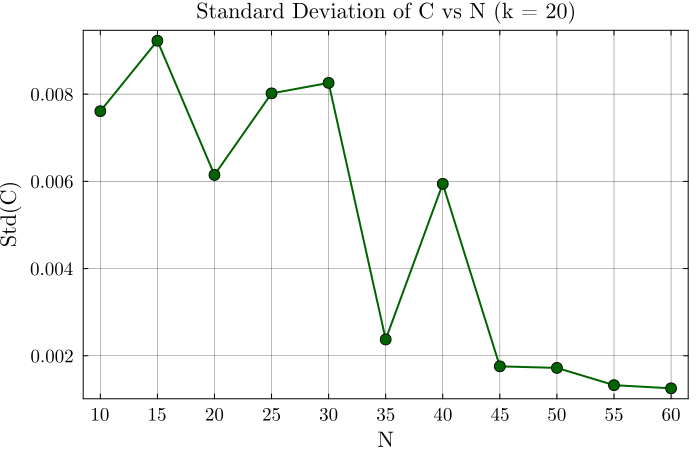}
    \end{minipage}
\end{figure}

We also ran a single experiment for $k = 20,30$ and every value of $N = 10, 25, 40, 55, 70, 85, 100$, to give a sense of the behavior of the true constant as $N$ grows large. Encouraged by the fact that standard deviations of computed dilation constants become small as $N$ grows, we hoped that perhaps the graphs of ``single shot'' experiments would already be indicative of convergence, but we did not observe any decisive trend in this range (see \Cref{fig:combined-images}). 
On the other hand, we do see that the calculated dilation constant remains below $\sqrt{2}/\cos(\pi/k)$, which is the correction to the hypothetical limit constant $\sqrt{2}$ due to approximating the unit circle by a regular $k$-gon. 

\begin{figure}[ht]
    \centering
    \caption{The values of the dilation constant calculated for a single sample of size $N \times N$ for $N = 10, 25, \ldots, 100$, with $k=20$ (left) and $k=30$ (right). Note that the values obtained for $N\geq 40$ are always below $\sqrt{2}/\cos(\pi/k)$ (upper dotted line).}
    \label{fig:combined-images}
    
    \begin{minipage}[c]{0.45\textwidth}
        \centering
        \includegraphics[width=\textwidth]{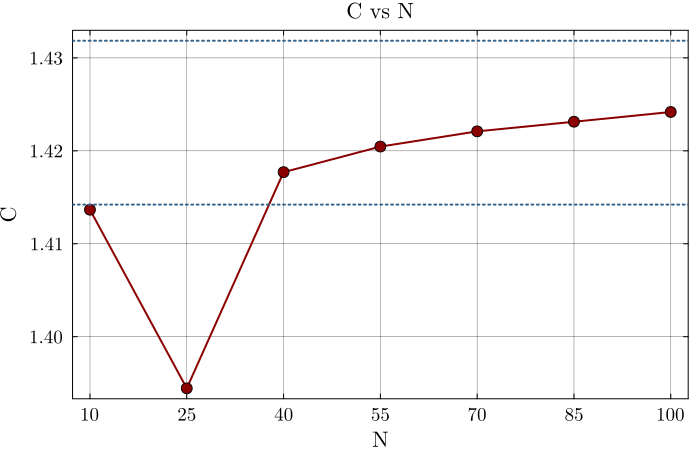}
    \end{minipage}
    \hfill
    \begin{minipage}[c]{0.45\textwidth}
        \centering
        \includegraphics[width=\textwidth]{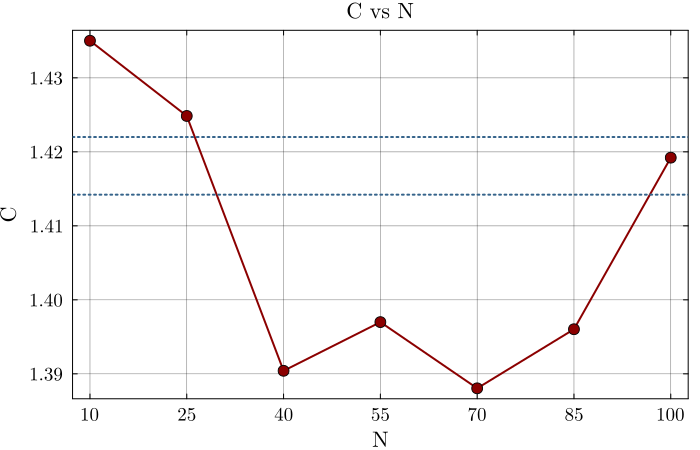}
    \end{minipage}
\end{figure}

To check larger $N$s, we ran single-shot experiments with $N = 25, 50, \ldots, 300$ $k=8$ (for which the error is about 8\%). The promising results are plotted in \Cref{fig:one-shot8}. Now the dilation constants appear to stabilize near $1.439$ before we reach the computational barrier. 
To verify the result, we computed the dilation constant for $N=125, k = 8$ and another 100 random pairs of unitaries; the histogram of the computed values, which cluster tightly around $1.439$, is shown in \Cref{fig:one-shot8}.

\begin{figure}[ht]
    \centering
    \caption{Dilation constants for a single sample of size $N \times N$ for $N = 25, 50, \ldots, 300$, with $k=8$ (left) alongside the histogram of 100 additional random trials for $N = 125$ (right). All values are well below $\sqrt{2}/\cos(\pi/k) \approx 1.53$.}
    \label{fig:one-shot8}
     \begin{minipage}[c]{0.45\textwidth}
        \centering
        \includegraphics[width=\textwidth]{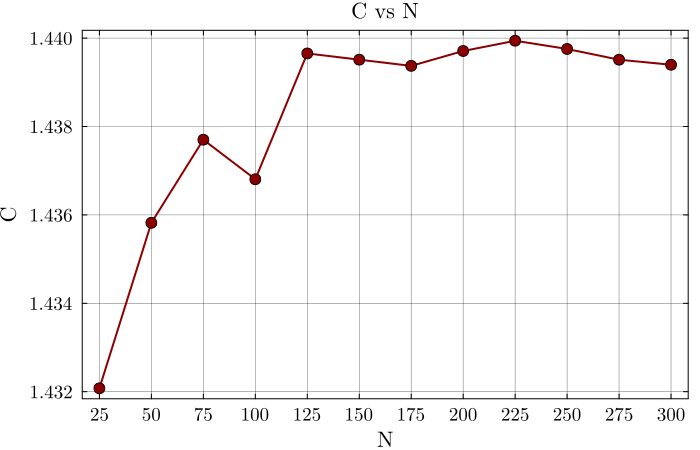}
    \end{minipage}
    \hfill
    \begin{minipage}[c]{0.45\textwidth}
        \centering
        \includegraphics[width=\textwidth]{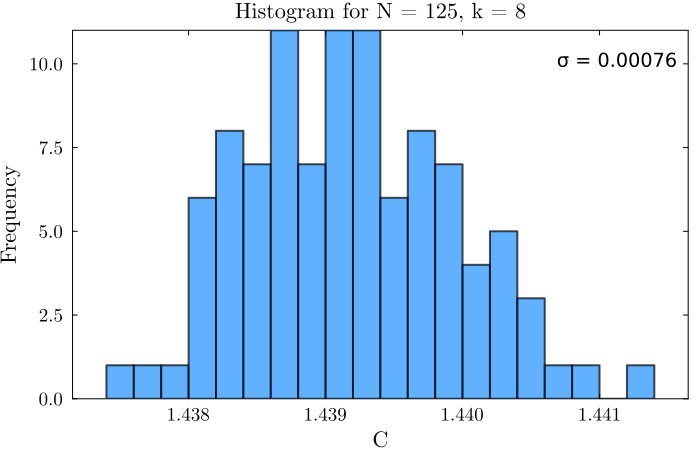}
    \end{minipage}    
\end{figure}

We repeated the experiment described on the left pane of \Cref{fig:one-shot8} ten more times. The results are shown in \Cref{fig:one-shot8_10}.  It seems that all paths are converging to something near $1.44$. Recall that by \Cref{thm:limit_c} and \cite[Theorem 3.1]{GSh21}, we expect to see
  \[
  c(u_{\mathrm f}, \mathsf{N})\leq\liminf c(U^{(N)},\mathsf{N})
  \]
almost every time, where $\mathsf{N}$ is the commuting pair with eigenvalues on the vertices of $P_k \times P_k$ for $k = 8$. 
The data suggests that the limit exists and is $\approx 1.44$. 

\begin{figure}[ht]
    \centering
    \caption{Ten random sequences $\{c(U^{(N)},\mathsf{N}) : N = 25, 50, \ldots, 300\}$.}
    \label{fig:one-shot8_10}
     \begin{minipage}[c]{0.5\textwidth}
        \centering
        \includegraphics[width=\textwidth]{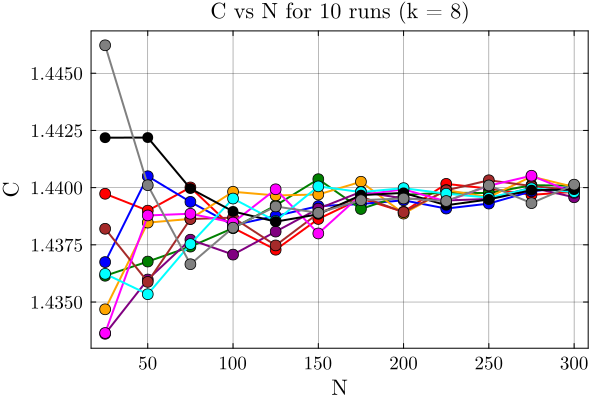}
    \end{minipage}
\end{figure}

\subsection{Code}
All numerical experiments were performed using the Julia programming language \cite{julia}. Optimization models were formulated using the Convex.jl package \cite{convexjl} and solved with the SCS solver \cite{scsjl}. We used DataFrames.jl \cite{dataframesjl} and CSV.jl \cite{csvjl} for data handling, and Plots.jl \cite{plotsjl} for visualization. 
Auxiliary scripts for running the series of experiments and visualizing the results were written in Visual Studio Code with the help of GitHub Copilot. 
The code was run on a MacBook Pro with Apple M4 Pro chip and 24 GB of memory. 
Our code is available at \url{https://oshalit.net.technion.ac.il/code/}.

\subsection{How reliable are the results}
Given that the results are based on numerical calculations of finitely many random samples $U^{(N)}$ for finitely many values of $N$ --- and not on a proof --- it is clear that we have not proved the conjectured $\lim_{N \to \infty} c(U^{(N)}, u_0) = \sqrt{2}$. 
The purpose of this short section is to address the subtler issue, of whether we can trust the result of the numerical computations that we carried out. 
Note that there is no question of trusting AI or not, since GitHub Copilot was only used for to help write simple scripts for running experiments and visualizing the results --- short, simple scripts whose correctness we verified ourselves; the issue is whether the {\em numerics} are reliable. 

The results that we report are consistent with the evidence that we have been observing for years when computing dilation constants for random matrices, using the optimization packages CVX in MATLAB and cvxpy in Python, invoking different solvers, and run on different computers and on the cloud, though we have not yet been able to reach values of $N$ and $k$ as large as we have here. 
It is likely that the SCS optimization package made the difference. 

The Splitting Conic Solver (SCS) \cite{ocpb:16,odonoghue:21} is a well established and tested solver. 
The optimization finished thousands of runs consistently with status ``solved''. It failed in a few cases when we fed the program problems that were too large, in which case no result was recorded. 

In order to increase our confidence in the results, we double-checked their reliability by computing the dilation constant for several pairs $(\mathsf{U},\mathsf{N})$ by the algorithm described in \Cref{subsec:algo} with both SCS as well as the commercial solver MOSEK \cite{mosekjl}. 
For several tests with $(N,k) = (30,14), (35,14), (40,10)$ and $(45,10)$ we obtained the same dilation constant up to an error of about $0.001$. 
This is reasonable given SCS's default tolerance $1\textup{e-}4$ and the fact that we only expect to approximate $c(u_{\mathrm f}, u_0)$ to within $0.01$ of its correct value for $k\leq 23$; see \Cref{eq:within}. 

The fact that two distinct packages, implementing different optimization algorithms, obtained similar values for the tested $N$ and $k$ is reassuring. MOSEK is considered reliable, but the high precision interior point method which it uses has complexity that scales cubically in the problem size, making it too memory intensive for large $N$ and $k$. Therefore, we could not run MOSEK for all the values of $(N,k)$ that we tested in the experiment.

Finally, the data documented here also fits well with the theory, in particular with our new results \Cref{thm:limit_c,cor:c_converge}, as with other rigorous bounds such as \Cref{eq:tri_ineq,eq:free_bounds}. 
The fact that the calculated dilation constants seem to accumulate just over the lower bound $\sqrt{2}$ we had for $c(u_{\mathrm f}, u_0)$ is another heuristic indication that the value $c(u_{\mathrm f}, u_0) = \sqrt{2}$ suggested by our experimentation is the correct value.

\appendix\section{A rigorous bound for the dilation constant \texorpdfstring{$C_2(n)$}{C2(n)}}

Denote by $C_2(n)$ the dilation constant such that for every pair of contractive $n\times n$ matrices, there exists a dilation to a pair of commuting matrices with norm less than or equal to $C_2(n)$. 
By Ando's theorem combined with the usual ``dilation theory in finite dimensions'' yoga \cite{hartz2021dilation,LevyShalit2014,McSh13} the constant $C_2(n)$ is also equal to:
\begin{enumerate}
\item the constant $C_{2,u}(n)$ such that for every pair of contractive $n\times n$ matrices, there exists a dilation to $C_{2,u}(n)$ times a pair of commuting unitary operators, and 
\item the constant $C_{2,u,\operatorname{fin}}(n)$ such that for every pair of contractive $n\times n$ matrices, there exists a dilation to $C_{2,u,\operatorname{fin}}(n)$ times a pair of commuting unitary {\em matrices}.
\end{enumerate}
To see this in detail, let $C$ be a constant, and suppose that a pair of $n \times n$ matrices $(A,B)$ has a dilation to $C(S,T)$ where $S,T$ are commuting contractive matrices. 
By Ando's theorem \cite{ando1963pair}, $(S,T)$ has a power dilation to a pair of commuting unitaries $(U,V)$, thus $(A,B)$ has a dilation to $C$ times a pair of commuting unitaries. This shows that $C_{2,u}(n) \leq C_2(n)$. 
Now, if a $(A,B)$ has a dilation to $C$ times a pair of commuting unitaries, then Theorem 1.5 in \cite{hartz2021dilation} tells us that $(A,B)$ has a dilation to $C$ times a pair $(U',V')$ of commuting unitary {\em matrices}. 
This shows that $C_{2,u,\operatorname{fin}}(n) \leq C_{2,u}(n)$. 
On the other hand, it is clear that $C_2(n) \leq C_{2,u,\operatorname{fin}}(n)$ and we conclude that 
\[
C_2(n) =  C_{2,u}(n) = C_{2,u,\operatorname{fin}}(n) . 
\]

\begin{theorem}
Let $A, B\in M_n(\mathbb{C})$ be unitary matrices. Then, for every $\lambda\in\mathbb{T}$, the matrices
  \[
    S_\lambda=
      \begin{pmatrix}
        A & \lambda B\\
        B &              A
      \end{pmatrix}
    , \ \ \ \ \
    T_\lambda=\begin{pmatrix}
    B                   &  A\\
    \bar\lambda A &  B
    \end{pmatrix}
  \]
commute, and 
 
  \[
    \inf_{\lambda\in\mathbb{T}}\textup{max}\bigl\{\|S_\lambda\|,\|T_\lambda\|\bigr\}\le\sqrt{2+2\sin\bigl(\tfrac{\pi}{2}(1-\tfrac{1}{n})\bigr)}.
  \]
\end{theorem}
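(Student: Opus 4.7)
The plan is to first check commutativity by a direct block computation. Multiplying out $S_\lambda T_\lambda$ and $T_\lambda S_\lambda$ in the $2\times 2$ block form gives, in both orders, the diagonal blocks $AB+BA$ and the off-diagonal blocks $A^2+\lambda B^2$ and $\bar\lambda A^2+B^2$, so the two products agree identically in $\lambda$. Nothing beyond block algebra is needed here, and the commutativity does not even use that $A,B$ are unitary.

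Next, I would reduce the norm question for both $S_\lambda$ and $T_\lambda$ to a single simple quantity. Using the C*-identity $\|M\|^2=\|MM^*\|$, I would compute $S_\lambda S_\lambda^*$ and $T_\lambda T_\lambda^*$: the unitarity of $A$ and $B$ makes every diagonal block equal to $2I$, while in either case the off-diagonal block works out to the same operator $Z:=X+\lambda X^*$ with $X:=AB^*$ unitary. A self-adjoint block matrix of the shape $\bigl(\begin{smallmatrix}2I&Z\\Z^*&2I\end{smallmatrix}\bigr)$ has eigenvalues $2\pm\sigma_i(Z)$ and hence norm $2+\|Z\|$. This gives $\|S_\lambda\|^2=\|T_\lambda\|^2=2+\|X+\lambda X^*\|$, so the problem reduces to the scalar inequality $\inf_{\lambda\in\mathbb T}\|X+\lambda X^*\|\le 2\sin(\tfrac\pi2(1-\tfrac1n))$ for an arbitrary unitary $X\in M_n(\mathbb C)$.

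The final step is a spectral and angular argument, which is the only place any real work happens. Diagonalizing $X=U\,\mathrm{diag}(e^{i\theta_1},\ldots,e^{i\theta_n})\,U^*$ and writing $\lambda=e^{i\phi}$, the $j$-th diagonal entry of $D+\lambda D^*$ factors as $2e^{i\phi/2}\cos(\theta_j-\phi/2)$, so
\[
\|X+\lambda X^*\|=2\max_{1\le j\le n}\bigl|\cos(\theta_j-\phi/2)\bigr|.
\]
Using $2\cos^2 x=1+\cos 2x$ converts this into minimizing $\max_j \cos(\alpha_j-\xi)$ over $\xi\in[0,2\pi)$, where $\alpha_j:=2\theta_j$. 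This infimum equals $\cos(\delta^*)$, with $\delta^*$ the covering radius of the point set $\{\alpha_j\}\subset\mathbb R/2\pi\mathbb Z$. The key (and only nontrivial) observation is the pigeonhole fact that any $n$ points on a circle of circumference $2\pi$ leave an arc of length at least $2\pi/n$, whose midpoint is at distance at least $\pi/n$ from every $\alpha_j$; hence $\delta^*\ge\pi/n$ and the infimum is at most $\cos(\pi/n)$. Combining $(1+\cos(\pi/n))/2=\cos^2(\pi/(2n))$ with the identity $\cos(\pi/(2n))=\sin(\tfrac\pi2(1-\tfrac1n))$ yields exactly the bound claimed. I do not anticipate any real obstacle beyond setting up the pigeonhole step cleanly; the half-angle identity that converts $\cos(\pi/(2n))$ to the stated $\sin$ expression is a minor but pleasant consistency check that makes the constant come out clean.
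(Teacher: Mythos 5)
Your proof is correct and follows essentially the same route as the paper: block computation for commutativity, the C*-identity reducing $\|S_\lambda\|^2$ and $\|T_\lambda\|^2$ to $2+\|X+\lambda X^*\|$ for a unitary $X$, and then a pigeonhole argument on the $n$ spectral points of a unitary (your doubled angles $\alpha_j=2\theta_j$ are exactly the paper's use of $\sigma((B^*A)^2)$) to find a $\lambda$ antipodal to the largest spectral gap. The only cosmetic differences are that you observe $S_\lambda S_\lambda^*=T_\lambda T_\lambda^*$ directly where the paper uses the unitary relation $T_\lambda\bigl(\begin{smallmatrix}0&\lambda\\1&0\end{smallmatrix}\bigr)=S_\lambda$, and you phrase the final estimate via half-angle identities rather than $\sup_\mu|\lambda+\mu|$.
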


\begin{proof}
Let $A, B$ and $\lambda$ be as above. Then $S_\lambda$ and $T_\lambda$ commute,
  \[
    S_\lambda T_\lambda=\begin{pmatrix} AB+BA & A^2+\lambda B^2\\ B^2+\bar\lambda A^2 & BA+AB\end{pmatrix}=T_\lambda S_\lambda.
  \]
Next, we estimate the norm of $S_\lambda$. Note that
  \[
    S_\lambda^*S_\lambda=\begin{pmatrix} 2 & \lambda A^*B+B^*A\\ \bar\lambda B^*A+A^*B & 2\end{pmatrix},  
  \]
since $A$ and $B$ are unitary. Thus,
  \[    
    \|S_\lambda\|\le\sqrt{2+\|\lambda A^*B+B^*A}\|=\sqrt{2+\|\lambda+(B^*A)^2\|}.
  \]
This also provides an upper bound for $\|T_\lambda\|$, since
  \[
    T_\lambda\begin{pmatrix} 0 & \lambda\\ 1 & 0\end{pmatrix}=\begin{pmatrix} A & \lambda B\\ B & A\end{pmatrix}=S_\lambda.
  \]
Because $(B^*A)^2$ is unitary, we have that
  \[
    \|\lambda+(B^*A)^2\|=\sup_{\mu\in\sigma((B^*A)^2)}|\lambda+\mu|.
  \]
Next note that since $\sigma\bigl((B^*A)^2\bigr)$ contains at most $n$ distinct points, there exists an interval $[a,b]\subset\mathbb{R}$ such that:
  \begin{enumerate}[label=\roman*)]
    \item $\sigma\bigl((B^*A)^2\bigr)\subset\bigl\{e^{i\theta},\ \theta\in[a,b]\bigr\}$,
    \item $|a-b|\le 2\pi(1-1/n)$.
  \end{enumerate}
Let $c=\tfrac{b-a}{2}$ and set $\lambda=-e^{i\tfrac{b+a}{2}}$. Then
    \begin{align*}
      \sup_{\mu\in\sigma(B^*A)^2}|\lambda+\mu|
                                                \le\sup_{\theta\in[a,b]}\left|-e^{i\tfrac{b+a}{2}}+e^{i\theta}\right|
        =\sup_{\theta\in[a,b]}\left|e^{i(\tfrac{b+a}{2}-\theta)}-1\right|
       &
        =\sup_{\theta\in[-c,c]}|e^{i\theta}-1| \\
       &
        =\sup_{\theta\in[-c,c]}2\bigl|\sin(\theta/2)\bigr|.
    \end{align*}
Since $[-c,c]\subset[-\pi,\pi]$ and $|\sin(t)|$, restricted to $[-\tfrac{\pi}{2},\tfrac{\pi}{2}]$, is increasing in $|t|$, we obtain that 
  \[
    \sup_{\theta\in[-c,c]}2\bigl|\sin(\theta/2)\bigr|\le2\sin(\tfrac{c}{2})\le2\sin\left(\tfrac{\pi}{2}(1-1/n)\right).
  \]
\end{proof}

\begin{corollary}\label{cor:C2n}
It holds that
  \[
    C_2(n)\le\sqrt{2+2\sin\bigl(\tfrac{\pi}{2}(1-\tfrac{1}{2n})\bigr)} < 2.
  \]
\end{corollary}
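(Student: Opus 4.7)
The plan is to reduce the contractive case to the unitary case already handled by the theorem, paying for the reduction in the form of a doubling of the dimension. Given any pair $(A,B) \in M_n(\mathbb{C})^2$ of contractive matrices, I would first apply the standard unitary dilation to each operator independently, obtaining unitary matrices $\tilde A, \tilde B \in M_{2n}(\mathbb{C})$ such that $(A,B)$ is a simultaneous compression of $(\tilde A, \tilde B)$ to $\mathbb{C}^n \subset \mathbb{C}^{2n}$. There is no commutativity involved at this step; each contraction is dilated separately.

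Next, I would invoke the preceding theorem with $n$ replaced by $2n$ and with the unitary pair $(\tilde A, \tilde B)$. This produces a phase $\lambda \in \mathbb{T}$ and a commuting pair $S_\lambda, T_\lambda \in M_{4n}(\mathbb{C})$, of which $\tilde A$ and $\tilde B$ are the upper-left $(1,1)$ blocks, so that $(\tilde A, \tilde B)$ is a simultaneous compression of $(S_\lambda, T_\lambda)$, with the norm bound
\[
\max\bigl\{\|S_\lambda\|, \|T_\lambda\|\bigr\} \le \sqrt{2 + 2\sin\bigl(\tfrac{\pi}{2}(1 - \tfrac{1}{2n})\bigr)}.
\]
Since compressions compose, $(A,B)$ is a simultaneous compression of the commuting pair $(S_\lambda, T_\lambda)$, which establishes the stated upper bound for $C_2(n)$. (As explained in the discussion preceding the theorem, it is immaterial that $S_\lambda, T_\lambda$ need not be normal; Ando's theorem plus the finite dimensional dilation machinery identifies $C_2(n)$ with the analogous constant where the dilation is required to be a pair of commuting unitary matrices.)

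For the strict inequality $<2$, I would simply observe that $\tfrac{\pi}{2}(1 - \tfrac{1}{2n}) < \tfrac{\pi}{2}$ for all $n \in \mathbb{N}$, so $\sin\bigl(\tfrac{\pi}{2}(1 - \tfrac{1}{2n})\bigr) < 1$ and therefore the quantity under the square root is strictly less than $4$. There is no real obstacle in this argument; the entire content of the corollary is already contained in the theorem, and the only move is to pass from contractions to unitaries by the standard unitary dilation and to invoke transitivity of compressions.
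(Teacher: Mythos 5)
Your proposal is correct and follows exactly the paper's argument: dilate each contraction separately to a $2n\times 2n$ unitary via the standard Halmos construction, then apply the theorem with $n$ replaced by $2n$ and compose the compressions. The only difference is that you spell out the details (including the reduction to commuting, not necessarily unitary, dilations and the strict inequality) which the paper leaves implicit.
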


\begin{proof}
Observe that every pair of contractive $n\times n$ matrices $(A,B)$ has a dilation to a pair of unitary $2n\times2n$ matrices via
  \[
    \left(\begin{pmatrix} A & \sqrt{1-AA^*} \\ \sqrt{1-A^*A} & -A^* \end{pmatrix}, \begin{pmatrix} B  & \sqrt{1-BB^*}\\ \sqrt{1-B^*B} & -B^* \end{pmatrix}\right).
  \]
Thus the corollary follows from the previous theorem. 
\end{proof}

\noindent {\bf Acknowledgements.} The authors would like to thank the referee for suggesting several corrections and improvements. In particular, Remark \ref{rem:selfadjoint} was stimulated by the referee report.


\bibliographystyle{myalphaurl-sortbyauthor}
\bibliography{maltebib}
\setlength{\parindent}{0pt}
\end{document}